\documentclass[12pt]{amsart}
\usepackage[utf8]{inputenc}
\usepackage[hidelinks]{hyperref}
\usepackage[T1]{fontenc}

\usepackage{amsmath}
\usepackage[utf8]{inputenc}
\usepackage{amssymb}
\usepackage{amsthm}
\usepackage{graphicx}
\usepackage{color, soul}
\usepackage{centernot}
\usepackage{verbatim}
\usepackage{multirow}
\usepackage{array}   
\newcolumntype{L}{>{$}l<{$}} 
\usepackage{setspace}
\usepackage{mathrsfs}
\usepackage{enumitem}
\usepackage{tikz}
\usepackage{bigints}
\usepackage{bm}
\usepackage{caption}
\usepackage{subcaption}
\usepackage{url}
\usepackage{imakeidx}
\makeindex[columns=2, intoc]
\usepackage{tikz-cd}

\makeatletter
\renewcommand{\@biblabel}[1]{[#1]\hfill}
\makeatother

\newtheorem{theorem}{Theorem}[section]

\newtheorem{lemma}[theorem]{Lemma}
\newtheorem{proposition}[theorem]{Proposition}

\theoremstyle{definition}
\newtheorem{definition}[theorem]{Definition}
\newtheorem{remark}[theorem]{Remark}


\usepackage{mathtools}
\mathtoolsset{showonlyrefs}


\newcommand{\OO}{\mathcal{O}}

\newcommand{\QQ}{\mathbb{Q}}
\newcommand{\ZZ}{\mathbb{Z}}

\newcommand{\FF}{\mathbb{F}}

\DeclareMathOperator{\End}{End}

\DeclareMathOperator{\disc}{disc}

\DeclareMathOperator{\Sel}{Sel}
\DeclareMathOperator{\Hess}{Hess}

\DeclareFontFamily{U}{wncy}{}
\DeclareFontShape{U}{wncy}{m}{n}{<->wncyr10}{}
\DeclareSymbolFont{mcy}{U}{wncy}{m}{n}
\DeclareMathSymbol{\Sha}{\mathord}{mcy}{"58}

\newcommand{\pp}{\mathfrak{p}}

\newcommand{\qq}{\mathfrak{q}}


\renewcommand{\bar}{\overline}

\usepackage{geometry}
\geometry{
	paper=a4paper, 
    inner = 3cm,
    outer = 3cm,
	top=2cm, 
	bottom=2cm, 
	headheight = 14pt
}

\title{Constructing families of 3-Selmer companions}

\author{Harry Spencer}
\address{University College London, WC1H 0AY, UK}
\email{harry.spencer.22@ucl.ac.uk}

\begin{document}

\begin{abstract}
Mazur and Rubin introduced the notion of $n$-Selmer companion elliptic curves and gave several examples of pairs of non-isogenous Selmer companions. We construct several pairs of families of elliptic curves, each parameterised by $t\in\ZZ$, such that the two curves in a pair corresponding to a given $t$ are non-isogenous $3$-Selmer companions, possibly provided that $t$ satisfies a simple congruence condition.
\end{abstract}

\maketitle

\section{Introduction}
We recall the following definition introduced by Mazur and Rubin.

\begin{definition}[{=\cite[Definition 1.2]{MazurRubin2012}}]\label{defn:pSelm}
    Let $n>1$ be an integer. Two elliptic curves $E$ and $H$ over a number field $k$ are $n$\emph{-Selmer companions} if
    \begin{equation}\label{eqn:1}
        \Sel_n(E^\chi/k) \cong \Sel_n(H^\chi/k)
    \end{equation}
    for all quadratic characters $\chi$ of $k$.
\end{definition}

This definition was introduced in order to study the question of which arithmetic data associated to an elliptic curve $E$ can be recovered from the function $\chi\mapsto \dim_{\FF_p} \Sel_p(E^\chi)$ on quadratic characters, for some fixed prime $p$. The existence of non-isogenous $p$-Selmer companions shows that the isogeny class of $E$ cannot necessarily be recovered from such a function. The question of which data can be recovered remains open, although Mazur and Rubin conjecture that having isomorphic $n$-torsion is a necessary condition for $n$-Selmer (near-)companionship (\cite[Conjecture 7.15]{MazurRubin2012}) and work of Yu (\cite{Yu2Companions}) establishes this conjecture in the case $n=2$.

Several examples of non-isogenous Selmer companions are given in \cite{MazurRubin2012} and---to the best of the author's knowledge---no others appear in the literature. In this note we construct infinitely many non-isogenous pairs of $3$-Selmer companions. In particular, we study the following two-parameter families of curves:
\begin{eqnarray}
    E_{q,t} : \hspace{0.2cm} y^2 &=& x^3 + q^2x^2 + 3q^3x +3q^6t \\
    H_{q,t}: \hspace{0.2cm} y^2 &=& x^3+q(27-2q-81qt)x^2+q(q-9)^3x,
\end{eqnarray}
where $H_{q,t}$ is the Hessian curve associated to this model of $E_{q,t}$. In \S4 we show that several fixed values of $q$ yield families of $3$-Selmer companions as $t$ varies, possibly given a congruence condition on $t$:

\begin{theorem}[=Theorem \ref{thm:main} + Proposition \ref{prop:isog}]
    Suppose $(q,t)\in\ZZ^2$ are as in one of the columns of Table \ref{tab:1} below. Then $E_{q,t}$ and $H_{q,t}$ are non-isogenous $3$-Selmer companions over every number field, provided $(q,t)\notin \{ (1,1), (1,9), (3,-1), (8,0), (12,0) \}.$
\end{theorem}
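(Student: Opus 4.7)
The plan is to separate the theorem into the Selmer companion assertion and the non-isogeny assertion, and to tackle them in that order.

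For the Selmer companion part, since $H_{q,t}$ is by construction the Hessian of $E_{q,t}$, the classical theory of the Hesse pencil should provide a canonical Galois-equivariant isomorphism $E_{q,t}[3] \cong H_{q,t}[3]$ that is preserved under quadratic twist. The first step is to make this identification explicit from the two given Weierstrass models, so that for any number field $k$ and any quadratic character $\chi$ of $k$ one obtains a canonical isomorphism of $G_k$-modules $E_{q,t}^\chi[3] \cong H_{q,t}^\chi[3]$. I would then invoke a Mazur--Rubin-type criterion: under such an isomorphism both Selmer groups embed in the common cohomology group $H^1(k, E_{q,t}^\chi[3])$, and they coincide provided the local Kummer images of $E_{q,t}^\chi(k_v)/3$ and $H_{q,t}^\chi(k_v)/3$ match at every place $v$. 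At places of good reduction away from $3$ both images equal the unramified subgroup, so the criterion reduces to a finite check at primes dividing the discriminant of either model, together with those above $3$.

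This local verification is the main obstacle. For each $q$ in Table~\ref{tab:1} I would enumerate the bad primes of $E_{q,t}$ and $H_{q,t}$, and at each such prime compare the two local images inside $H^1(k_v, E_{q,t}^\chi[3])$ using the explicit integral models, analysing the reduction type via Tate's algorithm and---in the additive case---working with the formal group. The outcome should depend on $t$ only through a congruence modulo a small integer coming from the Tamagawa-type data, and this is what gives rise to the column-by-column congruence conditions on $t$ in the table. The most delicate computation should be at $\ell = 3$ itself, where the $3$-adic local structure is intertwined with the Kummer map; this is where careful case analysis on the reduction type is essential. A crucial feature is that the resulting comparison is intrinsic to each completion $k_v$, so its success over $\QQ$ propagates automatically to every number field $k$, matching the strength of the theorem.

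Finally, for non-isogeny I would compute $j(E_{q,t})$ and $j(H_{q,t})$ explicitly as rational functions of $t$ for each fixed $q$. Because the two curves share their mod-$3$ Galois representation, any isogeny between them would have degree constrained by compatibility with the common $E_{q,t}[3]$; one can then check that no modular polynomial $\Phi_N$ of small admissible degree vanishes on the pair $\bigl(j(E_{q,t}), j(H_{q,t})\bigr)$ except at the finitely many sporadic $t$ listed. The excluded pairs $(1,1)$, $(1,9)$, $(3,-1)$, $(8,0)$, $(12,0)$ should correspond precisely to cases where either a genuine isogeny arises or one of the integral models degenerates, and I would rule them in or out individually by direct computation.
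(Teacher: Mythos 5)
Your outline has the right overall shape---Hessian $3$-torsion isomorphism, a Mazur--Rubin-type local criterion, and modular polynomials for non-isogeny---but in both halves the step that actually carries the difficulty is missing. For the Selmer half, you propose to compare the local Kummer images of $E_{q,t}^\chi$ and $H_{q,t}^\chi$ at every bad place directly from the integral models. This amounts to re-proving Mazur and Rubin's theorem rather than applying it, and the comparison must be made uniformly over \emph{all} quadratic characters $\chi$, including those ramified at the place in question (where the twist changes reduction type) --- it is not a finite check. What makes this tractable in the paper is that Mazur--Rubin package the problem into finitely many verifiable conditions: the sets of potentially multiplicative primes must agree, every prime above $3$ must be potentially multiplicative, the $3$-torsion isomorphism must carry the canonical (Tate-parameter) subgroup $\mathcal{C}_{E/k_\qq}[3]$ to $\mathcal{C}_{H/k_\qq}[3]$ at each such prime, and Kodaira types II, II$^*$, IV, IV$^*$ must be avoided at additive primes. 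None of these appears in your plan; in particular the canonical-subgroup condition is the genuinely delicate one, and the paper's key observation is that for a curve and its Hessian it holds automatically (by tracking valuations of $3$-torsion coordinates through the explicit change of variables relating a Tate model of $E$ to a Tate model of its Hessian). Your remark that the $\ell=3$ analysis is "delicate" gestures at this, but the resolution is structural (potentially multiplicative reduction at $3$ plus matching canonical subgroups), not a formal-group computation.

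For the non-isogeny half, the claim that the degree of a putative isogeny is "constrained by compatibility with the common $E_{q,t}[3]$" does not give a bound: a shared mod-$3$ representation does not restrict isogeny degrees to a finite list. The paper's argument needs three separate inputs you omit: (i) the curves have non-integral $j$-invariant, hence no potential CM, so any $\bar{\QQ}$-isogeny descends to a $\QQ$-isogeny after at most a quadratic twist (Lemma \ref{lem:Q_isog}); (ii) Mazur--Kenku then bounds the possible cyclic degrees over $\QQ$ to $N\le 19$ or $N\in\{21,25,27,37,43,67,163\}$; (iii) odd degrees are excluded at once because $\Delta_{H_{q,t}}/\Delta_{E_{q,t}}\equiv -3$ modulo squares, so the $2$-torsion fields differ and no odd-degree isogeny (which would induce an isomorphism on $2$-torsion) can exist. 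Only then does the finite modular-polynomial computation over the remaining even $N$ make sense and produce exactly the excluded pairs. Without (i) and (ii) your finite check is not justified, and without (iii) you would need $\Phi_N$ for all the odd admissible $N$ as well.
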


\renewcommand{\arraystretch}{1.5}
\begin{table}[ht]
    \centering
    \begin{tabular}{|c|c|c|c|}
         \hline
         Value of $q$ & $-15,1,5,13,17,21$ & $3,7,11$ & $8,10,12$ \\
         \hline
         Condition on $t$ & $t\equiv1\pmod{8}$ & $t\equiv3\pmod{4}$ & $t\in\ZZ$ \\
         \hline
    \end{tabular}
    \caption{Parameters $q,t$ for which $E_{q,t}$ and $H_{q,t}$ are $3$-Selmer companions}
    \label{tab:1}
\end{table}

The main idea of the proof is to simplify a set of sufficient conditions for $3$-Selmer companionship (\cite[Theorem 3.1]{MazurRubin2012}, cf.\ Proposition \ref{prop:3Comps}) in the case of an elliptic curve and its Hessian. We then apply this to the families $E_{q,t}$ and $H_{q,t}$ for certain values of $q$. Note that we do not claim that the pairs $(q,t)$ appearing in Table \ref{tab:1} constitute an exhaustive list in any sense (see Remark \ref{rmk:exhaust}). Moreover we note that, by tweaking the family $E_{q,t}$, one can likely construct many other examples.

We hope that the techniques and results of this work may be helpful for testing potential necessary conditions; for example, Mazur and Rubin speculate that having equal conductors may be a necessary condition and indeed this is the case above.

\section{Sufficient conditions}
Here we recall the main result of \cite{MazurRubin2012}, a set of sufficient conditions (stated below for $q=3$) for $q$-Selmer companionship, where $q$ is a prime power.

\begin{definition}\label{defn:TateGroup}
    Let $E/K$ be an elliptic curve with potentially multiplicative reduction over a local field of mixed characteristic. Given its associated Tate parameter $q\in K^\times$ with isomorphism $\tau_{E/K}: \bar{K}^\times/q^\ZZ \to E(\bar{K})$ and a positive integer $m$, we define the canonical subgroup $\mathcal{C}_{E/K}[m]\subset E[m]$ to be $\tau_{E/K}(\mu_m)$.
\end{definition}

\begin{remark} \label{rmk:canonical}
    An alternative description of the canonical subgroup of an elliptic curve with potentially multiplicative reduction is as $E_0(K')[m]$, the $m$-torsion in the subgroup of $E(K')$ consisting of points reducing to smooth points on the special fibre of a minimal model, where $K'=K(E[m])$ (e.g.\ \cite[V.4]{SilvermanAdvanced}). In particular, given a (necessarily minimal) model $\mathcal{E}: y^2+xy=x^3+a_2x^2+a_4x+a_6$ with $a_2,a_4,a_6$ of positive valuation, the non-identity points of $\mathcal{C}_{E/K}[m]$ are those $m$-torsion points with $(x,y)$-coordinates of non-positive valuation.
\end{remark}

\begin{theorem}[{=\cite[Theorem 3.1 + Remark 3.3]{MazurRubin2012}}]\label{thm:MazurRubin}
    Let $E$ and $H$ be elliptic curves over a number field $k$, and write $S_1$ and $S_2$ for the set of primes of potentially multiplicative reduction of $E$ and of $H$, respectively. Suppose the following:
    \begin{itemize}
        \item there is a $G_k$-isomorphism $\alpha: E[3]\xrightarrow{\sim} H[3]$;
        \item $S_1=S_2$;
        \item for every $\pp$ above $3$, $\pp\in S_1=S_2$;
        \item for every $\qq\in S_1=S_2$, we have $\alpha(\mathcal{C}_{E/k_\qq}[3])=\mathcal{C}_{H/k_\qq}[3]$;
        \item neither $E$ nor $H$ has any prime of additive reduction with Kodaira type \emph{II}, \emph{II}$^*$, \emph{IV} or \emph{IV}$^*$. 
    \end{itemize}
    Then $E$ and $H$ are $3$-Selmer companions over every finite extension of $k$.
\end{theorem}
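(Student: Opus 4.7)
The plan is to attack the theorem via the standard local-to-global characterisation of the Selmer group. For a quadratic character $\chi$, the group $\Sel_3(E^\chi/k)$ is the subgroup of $H^1(k, E^\chi[3])$ of classes whose restriction at each place $v$ lies in the image of the local Kummer map $\delta_v^{E^\chi}: E^\chi(k_v)/3E^\chi(k_v) \hookrightarrow H^1(k_v, E^\chi[3])$. Twisting $\alpha$ by $\chi$ produces a $G_k$-equivariant isomorphism $\alpha^\chi: E^\chi[3]\xrightarrow{\sim}H^\chi[3]$, inducing isomorphisms $H^1(k, E^\chi[3])\xrightarrow{\sim} H^1(k, H^\chi[3])$ globally and at every completion. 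The problem therefore reduces to the purely local claim that for every place $v$,
\[\alpha^\chi_{*,v}\bigl(\im \delta_v^{E^\chi}\bigr) = \im \delta_v^{H^\chi},\]
which I would verify case-by-case on the reduction type of $E^\chi,H^\chi$ at $v$.

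The tame cases go quickly. Archimedean places contribute nothing, since $H^1(k_v, E^\chi[3])$ vanishes as $3$ is odd. At a finite $v$ of good reduction (necessarily $v\nmid 3$ by the third hypothesis), the image of $\delta_v^{E^\chi}$ is the unramified subgroup of $H^1(k_v,E^\chi[3])$, which $\alpha^\chi$ preserves by Galois-equivariance. At places of potentially good additive reduction, the Kodaira restriction does exactly the work needed: the allowed types III, $\mathrm{I}_0^*$, III$^*$ correspond to extensions $L/k_v$ of degree coprime to $3$ over which the curve acquires good reduction, so a restriction-corestriction argument shows the restriction map $H^1(k_v,E^\chi[3])\to H^1(L, E^\chi[3])$ is injective and reduces matters to the good-reduction case; the forbidden types II, II$^*$, IV, IV$^*$ would require a degree-$3$ or $6$ extension, for which this argument fails.

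The crux is the case of places $\qq$ of potentially multiplicative reduction, including those above $3$, where the canonical-subgroup hypothesis enters. Here I would use Tate uniformisation: after at worst an unramified quadratic twist, $E^\chi/k_\qq \cong \bar{k_\qq}^\times/q_E^\ZZ$, and the short exact sequence
\[0 \to \mathcal{C}_{E^\chi/k_\qq}[3] \to E^\chi[3] \to E^\chi[3]/\mathcal{C}_{E^\chi/k_\qq}[3] \to 0\]
together with its long exact cohomology sequence describes $\im \delta_\qq^{E^\chi}$ purely in terms of $\mathcal{C}_{E^\chi/k_\qq}[3]$ as a sub-$G_{k_\qq}$-module of $E^\chi[3]$. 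The hypothesis $\alpha(\mathcal{C}_{E/k_\qq}[3]) = \mathcal{C}_{H/k_\qq}[3]$ twists compatibly under $\chi$ and so forces $\alpha^\chi_{*,\qq}$ to identify the two Kummer images. The main obstacle is the refinement of this at primes above $3$: here $H^1(k_\qq, E^\chi[3])$ picks up extra $\FF_3$-dimension from wild-ramification contributions to the local Euler characteristic, and one must check by a careful dimension count, using Tate local duality and orthogonality of the Kummer image under the local cup-product pairing, that the subspace carved out by the Tate uniformisation is indeed the correct one under $\alpha^\chi_{*,\qq}$; this is essentially the content of Remark 3.3 of Mazur--Rubin, and is where I expect almost all of the technical work to lie. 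Finally, all five hypotheses are stable under finite base change of $k$, so the local-condition matching persists and the argument yields $3$-Selmer companionship over every finite extension.
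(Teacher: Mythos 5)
The paper does not prove this statement: it is quoted verbatim from Mazur and Rubin (\cite[Theorem 3.1 + Remark 3.3]{MazurRubin2012}) and used as a black box, so there is no in-paper proof to compare against. Your outline is nonetheless a faithful reconstruction of the strategy of the cited proof: reduce to matching the local Kummer images under $\alpha^\chi$ place by place; dispose of archimedean and good places via vanishing of cohomology and unramified classes; observe that the permitted Kodaira types at potentially good additive places are exactly those for which good reduction is attained over an extension of degree prime to $3$, so that restriction--corestriction applies; and describe the local condition at potentially multiplicative places, including those above $3$, via the Tate parametrisation and the canonical subgroup. Two caveats: first, the steps you defer (the exact description of the Kummer image at potentially multiplicative $\qq$, especially $\qq\mid 3$, and the residue characteristic $2$ places where the Kodaira-type/ramification-degree dictionary is more delicate) are precisely where the technical content of Mazur--Rubin's local propositions lies, so what you have is an accurate map of the proof rather than a complete argument; second, you tacitly assume $E^\chi$ and $H^\chi$ share the same coarse reduction type at each place, which does follow from the hypotheses (for $v\nmid 3$ via the N\'eron--Ogg--Shafarevich criterion applied to the unramified module $E[3]\cong H[3]$, together with $S_1=S_2$), but should be stated.
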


Each of these conditions is simple to check for a given putative pair of companions except the fourth; in the following section we show that this fourth condition holds for a class of examples arising from the Hessian construction. 

\section{Hessian curves}

As mentioned, the main idea behind the proof of Theorem \ref{thm:main} is to simplify the conditions of Theorem \ref{thm:MazurRubin} when $H$ is the Hessian curve associated to a Weierstrass model of $E$. The Hessian construction is well-known and is described in detail, for example, in \cite[\S2.2]{MPT24}. In this section we recap the construction, relate the reduction behaviour of an elliptic curve with that of its associated Hessian curve, and show how it links to the fourth condition of Theorem \ref{thm:MazurRubin}.

\begin{definition}
     For an elliptic curve $E: F(X_1,X_2,X_3)=0$ given by the vanishing of a cubic homogeneous polynomial in three variables, the corresponding \textit{Hessian curve}, $\Hess(E)$ is defined by the vanishing of the determinant of the Hessian matrix $(\frac{\partial F}{\partial X_i \partial X_j})_{i,j}$.
\end{definition}

\begin{remark}
    Given an elliptic curve in Weierstrass form, the isomorphism class of the associated Hessian curve is invariant under change of Weierstrass equation (see \cite[\S2.2]{MPT24}).
\end{remark}

The following lemma, which relates the reduction behaviour of an elliptic curve with that of its Hessian, will allow us to verify some of the conditions of Theorem \ref{thm:MazurRubin} in many cases. 

\begin{lemma}\label{lem:mult}
    Let $K$ be a local field of mixed characteristic. Let $E/K$ be an elliptic curve in Weierstrass form and suppose the associated Hessian curve $H/K$ is not a union of three lines. We have:
    \begin{enumerate}
        \item If $E$ has potentially multiplicative reduction, then so does $H$.
        \item If $E$ has multiplicative reduction, then so does $H$. Moreover, in this case, $E$ has split multiplicative reduction if and only if $H$ does also.
    \end{enumerate} 
\end{lemma}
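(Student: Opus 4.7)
The plan is to compute the Hessian's Weierstrass invariants explicitly in terms of those of $E$. After putting $E$ in short Weierstrass form $y^2 = x^3 + a_2 x^2 + a_4 x + a_6$ (which doesn't affect the isomorphism class of $H$), expanding the Hessian determinant and passing to Weierstrass form shows that $H$ is $K$-isomorphic to $y^2 = x^3 - \beta x^2 - \tfrac{\alpha^3}{3} x$, where $\alpha = a_4 - a_2^2/3 = -c_4(E)/48$ and $\beta = \tfrac{2}{9} a_2^3 - a_2 a_4 + 3a_6 = -c_6(E)/288$. From this one derives the fundamental identity
\[
j(H) = \frac{\bigl(6912 - j(E)\bigr)^3}{27\, j(E)^2}
\]
together with explicit formulas expressing $c_4(H)$, $c_6(H)$ and $\Delta(H)$ as rational functions of $c_4(E), c_6(E), \Delta(E)$; for instance $c_4(H) = (c_4(E)^3 - 6912\,\Delta(E))/12^4$.

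Part (1) is then immediate: potentially multiplicative reduction is equivalent to $v(j)<0$, and if $v(j(E))<0\le v(6912)$, then $v(6912-j(E))=v(j(E))$, so the displayed identity yields $v(j(H))=v(j(E))-3v(3)<0$.

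For (2), suppose $E$ has multiplicative reduction. When the residue characteristic is not $2$, the short form may be taken minimal, so $v(c_4(E))=0$ and $v(\Delta(E))>0$; substitution into the explicit formulas gives $v(c_4(H))=0$ and $v(\Delta(H))>0$ in the derived model of $H$, witnessing multiplicative reduction for $H$. For the split-versus-non-split equivalence I would use the classical criterion that a multiplicative elliptic curve is split iff $-c_6$ is a square in the residue field: a short argument shows $\bar c_4(E)\in k^{\times 2}$ automatically under multiplicative reduction (the node of the reduced cubic being $k$-rational), and the formula $\bar c_6(H)\equiv \bar c_6(E)\cdot \bar c_4(E)^3/2^{12}\pmod{\pp}$ then places $-\bar c_6(H)$ and $-\bar c_6(E)$ in the same class of $k^\times/k^{\times 2}$. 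In residue characteristic $2$ the short form of $E$ cannot be minimal when $E$ has multiplicative reduction, so I would redo the Hessian computation starting from a general minimal Weierstrass form of $E$ (with $a_1\ne 0$) to obtain an integral model of $H$ directly; alternatively, Kraus's criteria can be invoked to locate a minimal model of $H$ within the models with the prescribed $(c_4,c_6)$ computed above.

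The main obstacle will be the residue characteristic $2$ case: carefully tracking the non-minimality of the short Weierstrass form through the Hessian construction and applying Kraus's conditions. The remaining identities needed are routine algebraic manipulations.
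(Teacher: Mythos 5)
Your route is genuinely different from the paper's. The paper reduces to the split multiplicative case by an initial base change, takes $E$ in Tate normal form $y^2+xy=x^3+a_4x+a_6$ with $v(a_4),v(a_6)>0$, and massages the Hessian directly into another Tate normal form; this handles every residue characteristic, including $2$ and $3$, in one stroke, and then deduces (1) from (2) by base change. You instead work with $c_4,c_6,\Delta,j$. Your formulas check out: the Hessian of $y^2=x^3+a_2x^2+a_4x+a_6$ is indeed $K$-isomorphic to $y^2=x^3-\beta x^2-\tfrac{\alpha^3}{3}x$ with $\alpha=-c_4/48$, $\beta=-c_6/288$, and one does get $c_4(H)=(c_4(E)^3-6912\Delta(E))/12^4$ and $j(H)=(6912-j(E))^3/(27\,j(E)^2)$, so part (1) follows at once from $v(j(H))=v(j(E))-3v(3)<0$. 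That part of the argument is complete and arguably cleaner than the paper's.

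The gap is in part (2), and it sits exactly at the two residue characteristics the paper needs. At residue characteristic $3$ your intermediate claim is false as stated: the derived model of $H$ is not integral (the coefficient $-\alpha^3/3$ and the formulas for $c_4(H)$ and $\Delta(H)$ carry powers of $3$ in their denominators), and for a minimal model of $E$ with multiplicative reduction one computes $v(c_4(H))=-4v(3)\neq 0$ for that model, so ``$v(c_4(H))=0$ and $v(\Delta(H))>0$'' does not hold and cannot witness multiplicative reduction. This is repairable --- rescale to the integral model $y^2=x^3-9\beta x^2+(a_2^2-3a_4)^3x$, which has $v(c_4)=0$ and $v(\Delta)=3v(3)+v(\Delta(E))>0$, or argue model-independently through $\gamma=-c_4/c_6$ modulo squares --- but as written the step fails precisely at primes above $3$, where the paper's application requires multiplicative reduction. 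At residue characteristic $2$ you give only a plan (redo the computation with $a_1\neq0$, or invoke Kraus); that pending computation is essentially the one the paper actually performs via the Tate normal form, so until it is carried out the lemma is not proved in the stated generality. Two smaller points: the congruence for $\bar c_6(H)$ should have $12^6=2^{12}3^6$ rather than $2^{12}$ in the denominator (harmless modulo squares away from $3$, meaningless at $3$); and, on the positive side, your $-c_6$-square criterion delivers both directions of the ``split iff split'' equivalence simultaneously, whereas the paper's computation explicitly establishes only the forward implication.
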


\begin{remark}
    The condition on the Hessian curve not being a union of three lines is equivalent to $j(E)\ne0$.
\end{remark}

\begin{proof}[Proof of Lemma \ref{lem:mult}]
    First suppose that $E$ has split multiplicative reduction. We can take $E$ to be given by a (minimal) Tate curve model $\mathcal{E}/\OO_K: y^2+xy=x^3+a_4x+a_6$, where $a_4$ and $a_6$ have positive valuation (see \cite[V.3]{SilvermanAdvanced}). Computing the Hessian gives a model
    \begin{equation}\label{eqn:Hess}\tag{$\dagger$} \mathcal{H}/\OO_K : y^2 + xy = -(3+3^2\cdot32a_4(24a_4-1))x^3+3^2\cdot(196a_4^2-4a_4-48a_6)x^2 + \hdots \end{equation}
    after a change of coordinates $Z'=Z+12X$. Note that the coefficient of $x$ and the constant term in \eqref{eqn:Hess} are $a_4-144a_4^2+72a_6$ and $4a_4^2-3a_6$, respectively. Setting $u=-(3+3^2\cdot32a_4(24a_4-1))$ and $(X',Y')=(uX,uY)$, then completing the cube, we obtain a model given by a Tate curve and see that $H$ has split multiplicative reduction.

    To conclude, suppose that $E$ has potentially multiplicative reduction. There is a finite extension of $K$ over which $E$ acquires split multiplicative reduction, and over this extension $H$ must also have split multiplicative reduction by the above.
\end{proof}

\begin{remark}
    The converse of Lemma \ref{lem:mult} is false; there are many examples of elliptic curves with (potentially) good reduction for which the associated Hessian curves have multiplicative reduction.
\end{remark}

One of the main drivers of interest in Hessians of curves is that they keep track of points of inflection. For elliptic curves, this gives an isomorphism on $3$-torsion between an elliptic curve and its associated Hessian curve.

\begin{proposition}\label{prop:tors_isom}
    Let $E/k$ be an elliptic curve over a number field given by Weierstrass equation whose Hessian curve $H=\Hess(E)$ is not the union of three lines. There is a $G_k$-module isomorphism $E[3]\cong H[3]$.
\end{proposition}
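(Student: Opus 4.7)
The strategy is to show that $E$ and $H = \Hess(E)$ share the same nine flex points as subsets of $\Proj^2(\bar k)$, and that these can be identified with both $3$-torsion groups in a manner that is simultaneously a group isomorphism and $G_k$-equivariant.

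First, I would recall the classical correspondence between flex points and $3$-torsion. For a smooth plane cubic $C \subset \Proj^2$ with chord-tangent group law and identity $O$ chosen at a flex, three points $P,Q,R \in C$ satisfy $P + Q + R = O$ if and only if they are collinear in $\Proj^2$ (counted with multiplicities); consequently $P \in C[3]$ iff $P$ is a point of inflection. A Weierstrass cubic has $[0:1:0]$ as a flex, which is taken as the origin. Moreover, by Euler's identity, the intersection $C \cap \Hess(C) \subset \Proj^2$ equals the flex locus of $C$, provided $\Hess(C)$ shares no component with $C$ (excluded here by the hypothesis that $\Hess(E)$ is not a union of three lines). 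Applied to $C = E$, this gives $E[3] = E \cap H$ as a subset of $\Proj^2(\bar k)$.

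Next, I would invoke the classical Hesse pencil fact: for $F$ defining $E$ and $G = \Hess(F)$, every smooth member of the pencil $\{F + \lambda G\}_{\lambda \in \Proj^1}$ has flex locus equal to the common base locus $\{F = G = 0\}$, i.e.\ the nine flex points of $E$. In particular $H$, a member of the pencil, has the same nine flex points as $E$, so $H[3] = E[3]$ as sets in $\Proj^2(\bar k)$ once a common flex is fixed as origin on both sides---for instance $[0:1:0]$, which lies on $H$ by direct substitution and is a flex of $H$ by the pencil fact. This set-theoretic identification $E[3] \to H[3]$ is automatically a group isomorphism, because ``$P + Q + R = O$'' translates on either side into the ambient condition of collinearity in $\Proj^2$ with the common origin, a condition independent of which cubic one views the points as lying on; $G_k$-equivariance is likewise automatic, since the Galois action is just the action on projective coordinates.

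The main obstacle is the Hesse pencil claim in the middle step, namely that the Hessian cubic has the same nine flex points as the original. I would justify this either by citation to a standard reference on the Hesse pencil, or directly via a Cayley--Bacharach style argument: cubics through nine points in sufficiently general position form a pencil, which is spanned by $F$ and any other cubic through those nine points---in particular by $G = \Hess(F)$---and a dimension count on the flex scheme of a smooth member then forces all nine points to remain flexes. A more computational alternative is to compare $3$-division polynomials of $E$ and $H$ directly, but the Hesse pencil route is considerably more conceptual.
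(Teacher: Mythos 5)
Your proposal is correct and is essentially an expanded version of the paper's argument: the paper simply cites \cite[Remark 2.4]{MPT24} and records the same key point, namely that $E\cap H$ is simultaneously the flex locus (hence the $3$-torsion) of both curves, with the group structure and Galois action read off from the ambient collinearity relations. The Hesse-pencil fact you correctly isolate as the main obstacle is exactly what the cited reference supplies, so no further work is needed beyond what you describe.
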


\begin{proof}
    See \cite[Remark 2.4]{MPT24}. The key point is that the intersection points of $E$ and $H$ are precisely the inflection points of both curves, and hence are the $3$-torsion points of both curves.
\end{proof}

Using this identification, we can verify the fourth condition of Theorem \ref{thm:MazurRubin} for a curve and its Hessian.

\begin{proposition}\label{prop:canon}
    Let $K$ be a local field of mixed characteristic with residue characteristic $p$. Consider a Weierstrass model for an elliptic curve $E/K$ such that both $E$ and $H=\Hess(E)$ have potentially multiplicative reduction. The identification $\alpha: E[3]\to H[3]$ as in Proposition \ref{prop:tors_isom} satisfies
    \[\alpha(\mathcal{C}_{E/K}[3])=\mathcal{C}_{H/K}[3].\]
\end{proposition}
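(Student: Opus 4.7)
The plan is to use Remark \ref{rmk:canonical} to convert the statement into a direct valuation computation. First I reduce to the case where $E$ has split multiplicative reduction: the canonical subgroup (defined via the Tate uniformization) and the isomorphism $\alpha$ (from the geometric identification of $E[3]$ with the common inflection points in $\mathbb{P}^2$) are both compatible with finite base change of $K$, so I may enlarge $K$ as needed; by Lemma \ref{lem:mult}, $H$ then also has split multiplicative reduction. Since $\Hess(E)$ is invariant under change of Weierstrass equation, I may further assume $E$ is given by the minimal Tate form $\mathcal{E}: y^2+xy=x^3+a_4x+a_6$ with $v(a_4),v(a_6)>0$.

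The proof of Lemma \ref{lem:mult} then supplies a minimal Tate model $\mathcal{H}^{\mathrm{Tate}}$ for $H$, obtained from $\mathcal{E}$'s projective equation via the change of coordinates $[X:Y:Z]\mapsto[X':Y':Z']=[uX:uY:Z+12X]$, where $u=-(3+3^2\cdot 32a_4(24a_4-1))$. Both Tate models have identity at the projective point $[0:1:0]$ and node at $[0:0:1]$ in their respective coordinates, and these pull back to the same geometric points of the ambient $\mathbb{P}^2$. Since the group law on the $9$ common inflection points is determined by collinearity in $\mathbb{P}^2$ together with the choice of identity, the two group structures on this point-set coincide, so the isomorphism $\alpha$ of Proposition \ref{prop:tors_isom} is the identity map on it. Applying Remark \ref{rmk:canonical} to both models then reduces the proposition to the following claim: a $3$-torsion point $P=[X_0:Y_0:Z_0]$ reduces to $[0:0:1]$ in the $(X,Y,Z)$-coordinate system if and only if $[uX_0:uY_0:Z_0+12X_0]$ reduces to $[0:0:1]$ in the $(X',Y',Z')$-coordinate system.

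I then verify this by a direct valuation computation over $K'=K(E[3])$, splitting on the class of $P$: the identity case is immediate; for a non-canonical $3$-torsion point one normalizes so that $v(X_0),v(Y_0)>v(Z_0)=0$, and a quick check shows both reductions land at $[0:0:1]$; for a non-identity canonical $3$-torsion point the normalized coordinates take the form $(v(X_0),v(Y_0),v(Z_0))=(s,0,3s)$ with $s=v(1-\zeta_3)$, and a short calculation using Tate's explicit formulas together with $\zeta_3^2+\zeta_3+1=0$ shows the image reduces to $[0:1:0]$ rather than $[0:0:1]$. The main technical obstacle is residue characteristic $3$, where $v(u)=v(12)=v(3)>0$ makes the change-of-coordinate matrix non-integral; here the argument works only because of the identity $v(3)=2v(1-\zeta_3)$ and a cancellation in $Z_0+12X_0$ forcing its valuation to be $5s$ rather than the naively expected $3s$, which is exactly what is needed to push the minimum valuation of the triple onto the middle coordinate and so send the reduction to $[0:1:0]$.
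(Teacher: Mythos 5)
Your proof takes essentially the same route as the paper's: reduce to split multiplicative reduction via Lemma \ref{lem:mult}, pass to the Tate model of $E$ and the model \eqref{eqn:Hess} of its Hessian via $(X',Y',Z')=(uX,uY,Z+12X)$, note that $\alpha$ is the set-theoretic identity on the nine common inflection points, and read off membership of the canonical subgroup from valuations of projective coordinates as in Remark \ref{rmk:canonical}. The only differences are that the paper stops after checking that the six points of $E[3]\setminus\mathcal{C}_{E/K}[3]$ (those with $v(X_0),v(Y_0)>v(Z_0)=0$) again reduce to the node, and concludes by counting since $\alpha$ is a bijection; your separate verification that the canonical points land on canonical points is therefore redundant. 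In that redundant case your emphasis is also slightly misplaced: with $(v(X_0),v(Y_0),v(Z_0))=(s,0,3s)$ and $v(u)=v(12)=2s$, the image triple $(uX_0,uY_0,Z_0+12X_0)$ has valuations $(3s,2s,\ge 3s)$, so its unique minimum already sits on the middle coordinate and the reduction is $[0:1:0]$ without invoking the cancellation $v(Z_0+12X_0)=5s$ --- that cancellation does occur (via $(1-\zeta_3)^2=-3\zeta_3$), but it is not what the argument hinges on.
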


\begin{proof}
    By passing to a suitable extension, we may assume that $E$ has split multiplicative reduction and note that Lemma \ref{lem:mult} ensures that $H$ does also.

    Take a model $\mathcal{E}$ for $E$ given by a Tate curve and the corresponding model $\mathcal{H}$ for its Hessian given by \eqref{eqn:Hess}, as in the proof of Lemma \ref{lem:mult}. We again apply the change of coordinates $(X',Y',Z')=(uX,uY,Z+12X)$, where $u=-(3+3^2\cdot32a_4(24a_4-1))$. 
    
    Write $v$ for the valuation on $K$, normalised so that $v(p)=1$ and choose a non-identity point $P=(X_0:Y_0:Z_0)\in E[3]$ with $v(X_0),v(Y_0)>0$ and $v(Z_0)=0$. For $p\ne3$ we see that $v(u)=0$, so the change of coordinates above does not change these valuations. For $p=3$, $v(u)=1$ and we see that the change of coordinates above increases the valuations of $X_0, Y_0$ and leaves the valuation of $Z_0$ the same.
    
    By the description of the canonical subgroup in Remark \ref{rmk:canonical}, we have shown that $E[3]\setminus\mathcal{C}_{E/K}[3]$ maps to $H[3]\setminus\mathcal{C}_{H/K}[3]$ and so conclude.
\end{proof}

Combining the above with Theorem \ref{thm:MazurRubin}, we have shown the following proposition, which we apply in the sequel to produce our families of $3$-Selmer companions:

\begin{proposition}\label{prop:3Comps}
    Suppose $E/k$ is an elliptic curve over a number field with potentially multiplicative reduction at the primes above $3$. Suppose further that $H=\Hess(E)$ has the same bad primes and the same potentially multiplicative primes as $E$, and that both curves avoid Kodaira types \emph{II}, \emph{II}$^*$, \emph{IV} and \emph{IV}$^*$ at potentially good primes. 
    
    Then $E$ and $H$ are $3$-Selmer companions over every finite extension of $k$.
\end{proposition}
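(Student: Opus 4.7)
The plan is to verify each of the five hypotheses of Theorem \ref{thm:MazurRubin} in turn and then directly apply it. First I would supply the $G_k$-isomorphism $\alpha: E[3] \xrightarrow{\sim} H[3]$ using Proposition \ref{prop:tors_isom}, observing that the proposition's hypothesis ``$H$ has the same bad primes as $E$'' implicitly requires $H$ to be a smooth elliptic curve rather than a degenerate union of three lines. The next two bullets of Theorem \ref{thm:MazurRubin} ($S_1 = S_2$, and every prime of $k$ above $3$ lying in $S_1$) are built directly into the hypotheses of the proposition: the first is our assumption that $E$ and $H$ share the same potentially multiplicative primes, and the second is the stipulation that $E$ has potentially multiplicative reduction at every prime above $3$.

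The fourth bullet is where Proposition \ref{prop:canon} does the work. For each $\qq \in S_1 = S_2$, both $E$ and $H$ have potentially multiplicative reduction over the local field $k_\qq$, so I would apply Proposition \ref{prop:canon} with $K = k_\qq$ to get $\alpha(\mathcal{C}_{E/k_\qq}[3]) = \mathcal{C}_{H/k_\qq}[3]$ immediately.

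The one mildly subtle point is the fifth bullet: our hypothesis excludes the forbidden Kodaira types \emph{II}, \emph{II}$^*$, \emph{IV}, \emph{IV}$^*$ only at potentially good primes of $E$ and $H$, whereas Theorem \ref{thm:MazurRubin} requires this exclusion at \emph{every} additive prime of either curve. To close this gap I would note that at a potentially multiplicative prime of additive reduction, the Kodaira type is necessarily of the form \emph{I}$_n^*$, which does not appear in the forbidden list; combined with the hypothesis at potentially good primes, this handles every additive prime. I do not anticipate any real obstacle here: the genuine content of the proposition has already been packed into Lemma \ref{lem:mult} and Propositions \ref{prop:tors_isom} and \ref{prop:canon}, and this final statement is essentially a bookkeeping exercise that repackages those results into a form directly applicable to the families $E_{q,t}$, $H_{q,t}$ of the next section.
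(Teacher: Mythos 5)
Your proof is correct and matches the paper's (implicit) argument: the paper simply states that the proposition follows by ``combining the above with Theorem~\ref{thm:MazurRubin}'', and your write-up is exactly that combination, checking the five bullets via Proposition~\ref{prop:tors_isom}, the hypotheses, and Proposition~\ref{prop:canon}. Your explicit observation that additive primes of potentially multiplicative reduction have Kodaira type \emph{I}$_n^*$ (hence are never of the forbidden types) is a detail the paper leaves unstated, and it correctly closes the only gap between the proposition's hypotheses and the fifth bullet of Theorem~\ref{thm:MazurRubin}.
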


\section{Constructing families}

We now come to constructing our families of $3$-Selmer companions. We will be interested in the following two-parameter families of elliptic curves:
\begin{eqnarray}
    E_{q,t}/\QQ : \hspace{0.2cm} y^2 &=& x^3 + q^2x^2 + 3q^3x +3q^6t \\
    H_{q,t}/\QQ : \hspace{0.2cm} y^2 &=& x^3+q(27-2q-81qt)x^2+q(q-9)^3x,
\end{eqnarray}
where $H_{q,t}$ is the Hessian curve associated to $E_{q,t}$. Our families of $3$-companions will consist of pairs $(E_{q,t},H_{q,t})$ for a fixed value of $q$ as $t$ is allowed to vary.

We first check that the curves' Kodaira types at $2$ avoid those forbidden by the conditions of Theorem \ref{thm:MazurRubin}.

\begin{lemma}\label{lem:kodaira_check}
    For $q\equiv3\pmod{4}$ and $t\equiv3\pmod{4}$, or $q\equiv1,5,13\pmod{16}$ and $t\equiv1\pmod{8}$, or $q\in\{8,10,12\}$, the curves $E_{q,t}$ and $H_{q,t}$ avoid Kodaira types \emph{II}, \emph{II}$^*$, \emph{IV} and \emph{IV}$^*$ at $2$.
\end{lemma}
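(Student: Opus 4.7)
The plan is to apply Tate's algorithm at $p=2$ to both $E_{q,t}$ and $H_{q,t}$ in each of the three regimes and verify that the resulting Kodaira types lie in the allowed set. The first step is to write down the standard invariants of the two models. For $E_{q,t}$ one computes $c_4 = 16 q^3(q-9)$ and $\Delta_E$ a concrete polynomial in $q,t$ obtained from the $b$-invariants. For $H_{q,t}$ the vanishing of $a_6$ collapses the discriminant to $\Delta_H = 16 q^2 (q-9)^6 \bigl(q^2(27-2q-81qt)^2 - 4q(q-9)^3\bigr)$, with $c_4(H_{q,t}) = 16\bigl(q^2(27-2q-81qt)^2 - 3q(q-9)^3\bigr)$. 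These explicit formulas reduce everything to $2$-adic valuation bookkeeping.

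Next I would split into the three regimes. When $q$ is odd (the first two regimes), the models are already minimal at $2$, and the congruence conditions on $(q,t)$ modulo $8$ or $16$ pin down $v_2(c_4)$ and $v_2(\Delta)$ sufficiently to identify the Kodaira type through Tate's algorithm. Specifically, one checks that in each odd-$q$ case either $v_2(\Delta)=0$ (giving good reduction) or the pair $(v_2(c_4),v_2(\Delta))$ forces one of the allowed additive types ($I_n^*$, III, or III$^*$); the congruences $t\equiv 3\pmod 4$ and $t\equiv 1\pmod 8$ are exactly what is needed to force the \emph{right} residue class of $v_2(\Delta)$ modulo $12$, ruling out valuations $2,4,8,10$. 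In the remaining regime $q\in\{8,10,12\}$, the given Weierstrass equations are non-minimal at $2$: applying $(x,y)\mapsto(u^2 x,u^3 y)$ for a suitable $u\mid q$ lowers $v_2$ of the $a_i$'s, and the Kodaira type of the resulting minimal model can be read off directly from the transformed $(a_2,a_4,a_6)$ modulo small powers of $2$.

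The main obstacle is that at $p=2$ the standard "$(v(c_4),v(\Delta))$ table" is not by itself decisive: distinguishing type~II from type~III, or type~IV from type~$I_0^*$, requires checking auxiliary conditions from Tate's algorithm (solvability in $\FF_2$ of certain quadratic and cubic polynomials built from lifts of the $a_i$'s). So in each subcase I would have to descend into the relevant step of Tate's algorithm and, after a translation $x\mapsto x+\alpha$ bringing the singular point of the reduction to the origin, verify the relevant solvability/splitting condition modulo $2$. Since the number of residue classes of $(q,t)$ modulo $16$ involved is finite and small, this is a bounded case check — straightforward in principle, best corroborated by a Magma or Sage computation of Kodaira types but also amenable to a compact hand verification for each listed $q$.
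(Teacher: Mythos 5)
Your overall strategy (a finite $2$-adic case check, with correct explicit formulas for $c_4$ and $\Delta$ of both curves) is in the same spirit as the paper's proof, but there is a concrete error in the middle step. You assert that for odd $q$ ``the models are already minimal at $2$.'' This is true for $E_{q,t}$ (its $a_2=q^2$ is a unit) and for $H_{q,t}$ when $q\equiv3\pmod4$, but it is \emph{false} for $H_{q,t}$ when $q\equiv1,5,13\pmod{16}$: there $v_2(q-9)\ge2$, so $v_2(a_4)=3v_2(q-9)\ge6$, and one checks $v_2(a_2)\ge2$ as well (with $a_6=0$), so the model admits a rescaling $a_i\mapsto a_i/2^i$ --- and in some subcases even $a_i\mapsto a_i/2^{2i}$ --- before it becomes minimal. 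Reading Kodaira types off the non-minimal model would give wrong answers; the paper's proof explicitly performs these rescalings. Honest execution of Tate's algorithm would eventually detect the non-minimality at Step 11 and force a restart, but as written your plan would skip this. A second, smaller issue: the sentence claiming that the pair $(v_2(c_4),v_2(\Delta))$, or the residue of $v_2(\Delta)$ modulo $12$, ``forces'' one of the allowed types is the $p\ge5$ logic and is simply not valid at $p=2$ (type II at $p=2$ can have large $v_2(\Delta)$, for instance). You do acknowledge this in your final paragraph and propose checking the auxiliary solvability conditions of Tate's algorithm instead, which repairs the logic, but it means that sentence is doing no work and should be deleted rather than patched. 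Also note $v_2(\Delta)=0$ never occurs here, since $\Delta$ of a model $y^2=\text{cubic}$ is divisible by $16$.

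For comparison: the paper avoids the auxiliary conditions of Tate's algorithm at $p=2$ entirely by invoking the Dokchitser--Dokchitser criterion, which (after suitable translations, rescalings and completing the cube) reads the Kodaira type directly off $\min_i v_2(a_i)/i$ together with, in the borderline $\mathrm{I}_0^*/\mathrm{I}_n^*$ cases, the valuations of $a_2^2-3a_4$ and of the cubic's discriminant. That tool makes the case check considerably shorter and less error-prone than descending into the split/non-split and solvability tests of raw Tate's algorithm, though your route would also work if carried out carefully with the minimality issue fixed.
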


\begin{proof}
    To prove this we apply \cite[Theorem 1]{KodairaTypesD2}, which uses Tate's algorithm to give criteria (primarily) in terms of valuations of Weierstrass coefficients for the Kodaira types of an elliptic curve at primes of additive reduction. We write $v_2$ throughout for the normalised valuation on $\QQ_2$.

    In the case $q\equiv t \equiv 3 \pmod{4}$, we make a change of variables $x'=x+q$ to obtain a Weierstrass equation for $E_{q,t}$ with $a_1=a_3=0$, $a_2=q^2-3q$, $a_4=q^3+3q^2$ and $a_6=3q^6t-2q^4-q^3$. We now see that $v_2(a_2)/2=v_2(q-3)/2\ge1$, that $v_2(a_4)/4=v_2(q+3)/4=1/4$ and that $v_2(a_6)/6=v_2(3q^3t-2q-1)/6 \ge 1/3$. We see that $\min_i v_2(a_i)/i=1/4$, so \cite[Theorem 1]{KodairaTypesD2} implies that $E_{q,t}$ has Kodaira type III at $2$. In the same case, for $H_{q,t}$ we have $v_2(a_2)/2=v_2(27-2q-81qt)/2\ge1$ and $v_2(a_4)/4=3v_2(q-9)/4=3/4$, so the Kodaira type is III$^*$.

    Now consider the case $q=8$. For $E_{q,t}$, we can rescale so that $a_2=4$, $a_4=6$ and $a_6=192t$, from which one we read off that $\min_iv_2(a_i)/i=1/4$ and so the Kodaira type is III. For $H_{q,t}$, we have $v_2(a_2)/2=v_2(88-5184t)/2=3/2$ and $v_2(a_4)/4=3/4$ so the Kodaira type is III$^*$. For $q=10$ we needn't rescale, and for $E_{q,t}$ we have $v_2(a_2)/2=v_2(100)/2=1$, $v_2(a_4)/4=v_2(3000)/4=3/4$ and $v_2(a_6)/6=v_2(3000000t)/6\ge1$ so the Kodaira type is III$^*$. For $H_{q,t}$ we have $v_2(a_2)/2=v_2(10(5-810t))/2=1/2$ and $v_2(a_4)/4=v_2(10)/4=1/4$ so the Kodaira type is III. For $q=12$, we rescale the Weierstrass coefficients of $E_{q,t}$ for $a_2=4$, $a_4=4$ and $a_6=2^6\cdot3t$ so $\min_iv_2(a_i)/i=1/2$. We have $v_2(a_2^2-3a_4)=v_2(4)=2$ and $v_2(\disc(x^3+4x^2+4x+2^6\cdot3t))=v_2(-995328t^2 + 6144t)\ge11$, from which we can conclude that the Kodaira type is I$_n^*$, for some $n$. For $H_{q,t}$ we have $v_2(a_2)/2=v_2(12(1-972t))/2=1$ and $v_2(a_4)/4=v_2(12\cdot3^3)/4=1/2$. Again, we check that $v_2(a_2^2-3a_4)=v_2(136048896t^2 - 279936t - 828)=2$ and that $v_2(\disc(x^3+12(1-972t)x^2+12\cdot3^3x))=v_2(14281868906496t^2 - 29386561536t - 120932352)\ge11$, so again the Kodaira type is I$_n^*$.
    
    For the cases $q\equiv1,5,13\pmod{16}$ and $t\equiv1\pmod{8}$ one needs to subdivide the cases further, distinguishing between $t\equiv1\pmod{16}$ and $t\equiv 9\pmod{16}$. For the sake of brevity we give slightly less detail here, but explain how to find the relevant Kodaira types via \cite[Theorem 1]{KodairaTypesD2} in all of these cases.

    Firstly we treat the curves $E_{q,t}$. Suppose $q\equiv 1\pmod{16}$. After the same change of variables as above, we see that for $E_{q,t}$ we have $\min_iv_2(a_i)/i=1/2$ regardless of whether $t\equiv1\pmod{16}$ or $t\equiv 9\pmod{16}$. For $t\equiv1\pmod{16}$, we compute $d:=\disc(x^3+a_2x^2+a_4x+a_6)$, write $q=16q_0+1$, $t=16t_0+1$ and substitute this into $d$, taking coefficients modulo $2^7=128$. This shows that $v_2(d)=6$ and so we conclude that the Kodaira type is I$_0^*$. Precisely the same argument gives the same Kodaira type for $q\equiv 5\pmod{16}$, $t\equiv 1\pmod{16}$ and $q\equiv 13\pmod{16}$, $t\equiv 9\pmod{16}$. For $q\equiv5\pmod{16}$, $t\equiv{9}\pmod{16}$ and $q\equiv13\pmod{16}$, $t\equiv 1\pmod{16}$, we follow the same argument but find that $v_2(d)\ge7$. In these cases we conclude that the Kodaira type is I$_n^*$ by noticing that $a_2^2-3a_4\equiv 4\pmod{8}$ for $q\equiv 5\pmod{8}$. For $q\equiv1\pmod{16}$ and $t\equiv 9\pmod{16}$ we note that $v_2(a_4-a_2^2/3)=3$, whilst $v_2(2a_2^3/27-a_2a_4/3+a_6)\ge4$ so completing the cube gives a model from which we read off that the Kodaira type is III$^*$.

    It remains to treat the $H_{q,t}$. Here we need to rescale the $a_i$ either by $2^{-i}$ or by $2^{-2i}$. The former is the case for $q\equiv5\pmod{8}$ (when $v_2(a_4)=6$ and $v_2(a_2)\ge2$) and $q\equiv1\pmod{16}$, $t\equiv 1\pmod{16}$ (when $v_2(a_4)=9$ and $v_2(a_2)=3$). If $v_2(a_2)=0$ after rescaling, then also apply the transformation $y'=y-x$. In all cases aside from $q\equiv1\pmod{16}$, $t\equiv9\pmod{16}$, after the changes of coordinates we see that $\min_i v_2(a_i)/i = 1/2$ and from here one can check that the Kodaira types are either I$_0^*$ or I$_n^*$ by computing the valuations of $d$ and of $a_2^2-3a_4$ as before. Lastly, in the case $q\equiv1\pmod{16}$, $t\equiv9\pmod{16}$ we have $v_2(a_2)\ge4$ and $v_2(a_4)=9$ prior to rescaling, so $v_2(a_4)=1$ after rescaling and either $v_2(a_2)\ge1$, in which case the Kodaira type is III, or $v_2(a_2)=0$ and we reach the same conclusion after the change of variables $y'=y-x$.
\end{proof}

Using the results of the preceding section, we are thus able to show:

\begin{theorem}\label{thm:main}
    In each case described by Table \ref{tab:1}, except for $(q,t)=(12,0)$, the curves $E_{q,t}$ and $H_{q,t}$ are $3$-Selmer companions over every number field.
\end{theorem}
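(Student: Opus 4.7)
The plan is to verify the hypotheses of Proposition \ref{prop:3Comps} for each pair $(E_{q,t}, H_{q,t})$ in Table \ref{tab:1}, excluding $(q,t)=(12,0)$ at which $E_{q,t}$ has vanishing discriminant and is therefore not an elliptic curve. Since $H_{q,t}$ is by construction the Hessian of a Weierstrass model of $E_{q,t}$, the remaining conditions to check are: (i) $E_{q,t}$ has potentially multiplicative reduction at every prime above $3$; (ii) $E_{q,t}$ and $H_{q,t}$ have the same sets of bad primes and of potentially multiplicative primes; and (iii) neither curve has Kodaira type II, II$^*$, IV, or IV$^*$ at any prime of additive reduction.

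For (i), when $3 \nmid q$ the reduction of $E_{q,t}$ modulo $3$ is $y^2 \equiv x^2(x + q^2) \pmod 3$, a node with rational tangent directions, giving split multiplicative reduction. For the remaining values $q \in \{-15, 3, 12, 21\}$ in Table \ref{tab:1} --- each satisfying $q/3 \equiv 1 \pmod 3$ --- the substitution $(x, y) \mapsto (9x, 27y)$ produces an integral model of the same shape with $q$ replaced by $q/3$, and the same argument applies. Lemma \ref{lem:mult} then transfers potentially multiplicative reduction at $3$ to $H_{q,t}$. For (ii) and (iii) away from small primes, the central computation is the discriminant identity
\[
q^6 \, \Delta_{H_{q,t}} \;=\; -432 \, (q-9)^6 \, \Delta_{E_{q,t}},
\]
verified by direct expansion. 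Combined with the formula $c_4(E_{q,t}) = 16 q^3(q - 9)$, this shows that at any prime $p \nmid 6 q(q-9)$ we have $v_p(\Delta_{E_{q,t}}) = v_p(\Delta_{H_{q,t}})$ and $v_p(c_4(E_{q,t})) = 0$, so $E_{q,t}$ is either smooth or of multiplicative type $I_n$ at $p$; Lemma \ref{lem:mult} then forces $H_{q,t}$ to be multiplicative too, settling (ii) and (iii) at all such primes.

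What remains is a case-by-case analysis at the finite, $q$-dependent set of primes dividing $2 \cdot 3 \cdot q \cdot (q-9)$. At $2$, condition (iii) is exactly Lemma \ref{lem:kodaira_check}, while (ii) is verified by a direct reduction computation using the congruence hypothesis on $t$. At $3$, condition (i) already supplies multiplicative reduction and hence Kodaira type $I_n$ for both curves. At odd primes dividing $q(q-9)$, an explicit Tate's algorithm computation --- simplified by the fact that the $t$-dependent contributions drop out of $c_4$ --- shows that the Kodaira type is always among $I_n$, $I_0^*$, $I_n^*$, $III$, or $III^*$, none of which is forbidden. The main obstacle will be the bookkeeping at these small primes when $q$ is composite (so for $q = -15, 12, 21$), and, more delicately, verifying the converse of Lemma \ref{lem:mult} directly at each bad prime in order to secure the second half of condition (ii), since that lemma as stated supplies only one direction of the matching of potentially multiplicative primes.
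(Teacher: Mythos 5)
Your plan follows essentially the same route as the paper's proof: reduce to Proposition \ref{prop:3Comps}, handle the prime $2$ via Lemma \ref{lem:kodaira_check}, check multiplicative reduction at $3$ directly, and use the $t$-independence of $\Delta_{H_{q,t}}/\Delta_{E_{q,t}}$ together with $c_4(E_{q,t})=16q^3(q-9)$ to dispose of all primes away from $2$, $3$ and the divisors of $q$ (for every $q$ in Table \ref{tab:1} one has $q-9=\pm2^a3^b$, so divisors of $q-9$ contribute nothing new). Two caveats. First, your discriminant identity has the wrong constant: one finds $q^6\,\Delta_{H_{q,t}}=-27\,(q-9)^6\,\Delta_{E_{q,t}}$, not $-432$; the discrepancy is a power of $2$, so nothing downstream breaks, but it should be corrected. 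Second, and more substantively, the decisive step --- the analysis at odd primes $p>3$ dividing $q$ (i.e.\ $p\in\{5,7,11,13,17\}$ for the tabulated $q$) --- is asserted rather than carried out, and this is where the actual content of the theorem lives. The paper closes it as follows: the numerator of $j(E_{q,t})$ is $4096(q-9)^3$, a power of $2$ times a power of $3$ for every tabulated $q$, so $j(E_{q,t})\not\equiv0\pmod p$ at any such $p$, which already excludes types II, II$^*$, IV and IV$^*$ for $E_{q,t}$ since these types force $p\mid j$; for $H_{q,t}$ one runs Tate's algorithm directly, using $a_6=0$ and $p^3\nmid b_8$ to terminate at step 4 with type III. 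You would need to supply an argument of this kind (or an equivalent explicit computation) to complete the proof. Finally, your observation that Lemma \ref{lem:mult} supplies only one direction of the matching of potentially multiplicative primes is a good catch; the paper resolves it implicitly by pinning down the Kodaira types, and hence the reduction types, of both curves at every remaining prime, which is exactly what your deferred computation would also have to deliver.
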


\begin{proof}
    In the exceptional case the curves are singular. For the rest it suffices to verify the conditions of Proposition \ref{prop:3Comps}, except for checking the Kodaira types at $2$, which is done by Lemma \ref{lem:kodaira_check}. 
    
    Firstly, it is simple to see that $E_{q,t}$ has multiplicative reduction at $3$ in each of the cases to be considered. 
    
    Now note that the ratio $\Delta_{H_{q,t}}/\Delta_{E_{q,t}}$ is
    \[-27 + 1458q^{-1} - 32805q^{-2} + 393660q^{-3} - 2657205q^{-4} + 9565938q^{-5} -14348907q^{-6}\]
    and so is independent of $t$. Hence we can easily check that there are no primes at which $H_{q,t}$ has bad reduction but $E_{q,t}$ has good reduction in each case of Table \ref{tab:1}.

    We also have
    \[j(E_{q,t})=\frac{4096q^3 - 110592q^2 + 995328q - 2985984}{-3888q^3t^2 - 192q^3t + 2592q^2t + 144q - 1728}.\]
    This denominator is $\Delta_{E_{q,t}}/q^9$ and, in each of our cases, this numerator is either a power of $2$ or a power of $2$ multiplied by a power of $3$, independently of $t$. It follows that for each value of $t$, the numerator of $j(E_{q,t})$ is a power of $2$ and that the denominator differs from the discriminant only by powers of $2,3$ and $q$. Hence the only primes of potentially good reduction for $E_{q,t}$ are $2$ and primes greater than $3$ dividing $q$. The case $p=2$ is checked in Lemma \ref{lem:kodaira_check} and, because the numerator of $j(E_{q,t})$ is a power of $2$, we see that $E_{q,t}$ cannot have Kodaira type {II}, {II}$^*$, {IV} or {IV}$^*$ at such a prime $2\ne p\mid q$ (as these types can only arise when $j\equiv0\pmod{p}$). To show that $H_{q,t}$ avoids these Kodaira types we run through Tate's algorithm, following the steps as described on \cite[p. 366]{SilvermanAdvanced}:

    We pass through step 1 because it is clear that $p\mid\Delta_{H_{q,t}}$, through step 2 without needing to make a change of coordinates, and then through step 3 because $a_6=0$. Upon reaching step 4, we see that $p^3\nmid b_8$ and so the Kodaira type is {III}.
\end{proof}

\begin{remark}\label{rmk:exhaust}
    To do anything but treat the values of $q$ on a one-by-one basis seems very difficult because one must keep track of the primes dividing the ratio of the discriminants, as well as of potentially good primes and the corresponding Kodaira types. Therefore we do not expect Theorem \ref{thm:main} to be exhaustive, in the sense that there are likely values of $q$ not appearing in Table \ref{tab:1} for which some condition on $t$ will yield a family of $3$-Selmer companions.
\end{remark}

It remains to verify that the curves of Theorem \ref{thm:main} do not consist of isogenous pairs. To do so we invoke a well-known theorem of Mazur and Kenku.

\begin{theorem}[{=\cite[Theorem 1]{Kenku1982}}] \label{thm:MazurKenku}
        Suppose an elliptic curve over $\QQ$ admits a cyclic $\QQ$-isogeny of degree $N$. Then $N\le 19$ or $N\in \{21, 25, 27, 37, 43, 67, 163\}$.
\end{theorem}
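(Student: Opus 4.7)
The plan is to reinterpret cyclic $\QQ$-isogenies of degree $N$ as $\QQ$-rational points on the modular curve $X_0(N)$, and then to stratify the analysis by the genus of $X_0(N)$. Up to the finite, well-understood contribution from CM elliptic curves, a non-cuspidal $\QQ$-point on $X_0(N)$ is exactly the datum of an elliptic curve over $\QQ$ together with a $\QQ$-rational cyclic subgroup of order $N$, so the statement is equivalent to showing that $X_0(N)(\QQ)$ contains no non-cuspidal, non-CM points for any $N$ outside the stated finite list.

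For $N$ with $g(X_0(N)) = 0$, namely $N \in \{1,\ldots,10, 12, 13, 16, 18, 25\}$, the modular curve is $\QQ$-rationally isomorphic to $\Proj^1$ and infinitely many cyclic $N$-isogenies are exhibited by an explicit rational parameterisation. For $N$ with $g(X_0(N)) = 1$, namely $N \in \{11, 14, 15, 17, 19, 20, 21, 24, 27, 32, 36, 49\}$, the modular curve is itself an elliptic curve over $\QQ$ with explicit Weierstrass model, so one computes $X_0(N)(\QQ)$ by descent and enumerates which points are cusps, CM points, or genuine new isogenies. Between them, these two cases dispose of every $N$ in the allowed list with $N \leq 27$ and rule out the spurious intermediate values $N \in \{20, 24, 32, 36, 49\}$.

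The crux, and main obstacle, is the higher-genus regime. For $N$ prime with $g(X_0(N)) \geq 2$, the target is Mazur's theorem: $X_0(N)(\QQ)$ consists only of the two cusps, except when $N \in \{37, 43, 67, 163\}$, where a single orbit of CM points also appears. I would follow Mazur's strategy: pass to the Jacobian $J_0(N)$, introduce the Eisenstein ideal $\I \subset \T$ in the Hecke algebra, use Eisenstein descent to show that $(J_0(N)/C)(\QQ)$ is finite, where $C$ is the cuspidal subgroup, and then invoke a formal-immersion argument at a cusp modulo an auxiliary good prime $\ell$ to force any non-cuspidal rational point to specialise to a cusp mod $\ell$, a contradiction for $N$ sufficiently large. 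This step depends on the nontrivial input $J_0(N)(\QQ)_{\tors} = C$ and is essentially the entire content of Mazur's \emph{Eisenstein ideal} paper.

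For composite $N$ with $g(X_0(N)) \geq 2$, Kenku's contribution is to exploit the natural $\QQ$-rational forgetful morphisms $X_0(N) \to X_0(M)$ for proper divisors $M \mid N$, to pull back the now-known descriptions of $X_0(M)(\QQ)$ in order to bound $X_0(N)(\QQ)$, and to use local arguments at primes of bad reduction to exclude the remaining lifts. This is laborious but requires no fundamentally new input once Mazur's prime case is in hand. Assembling the four regimes yields the stated finite list.
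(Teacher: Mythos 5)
The paper does not prove this statement at all: it is quoted verbatim from the literature as \cite[Theorem 1]{Kenku1982} and used as a black box in the proof of Proposition \ref{prop:isog}, so there is no internal proof to compare yours against. Your outline is a faithful summary of how the result is actually established in the literature (genus-$0$ and genus-$1$ analysis of $X_0(N)$, Mazur's Eisenstein-ideal and formal-immersion arguments for prime $N$ of genus $\geq 2$, and Kenku's use of the forgetful maps $X_0(N)\to X_0(M)$ for composite $N$), and the genus lists you quote are correct. But be clear that what you have written is a roadmap, not a proof: the entire mathematical content is deferred to Mazur's determination of $J_0(N)(\QQ)_{\tors}$ and the descent on $J_0(N)/C$, and to Kenku's case-by-case exclusion at composite level, neither of which you reproduce. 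For a theorem of this depth that the paper itself only cites, that is a reasonable stance, but it should be presented as a citation with context rather than as a proof.

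One factual correction: for $N=37$ the two non-cuspidal points of $X_0(37)(\QQ)$ are \emph{not} CM points --- they correspond to curves with $j=-7\cdot 11^3$ and $j=-7\cdot 137^3\cdot 2083^3$, neither of which is a class-number-one CM invariant. Only for $N\in\{43,67,163\}$ (and the relevant smaller levels) are the exceptional points accounted for by complex multiplication, so your phrase ``a single orbit of CM points'' misdescribes the $N=37$ case. Relatedly, your opening reduction ``up to the finite contribution from CM curves'' is slightly misplaced: CM curves are not excluded from the statement being proved (indeed they realise $N=43,67,163$), and the genuine subtlety in passing between non-cuspidal points of the coarse moduli space $X_0(N)$ and actual cyclic $\QQ$-isogenies concerns the extra automorphisms at $j=0,1728$, not CM per se.
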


Write $\Phi_N$ for the $N^\text{th}$ classical modular polynomial and recall that there is a degree $N$ cyclic $\bar{\QQ}$-isogeny between two rational elliptic curves $E$ and $H$ if and only if $\Phi_N(j(E),j(H))=0$ (e.g.\ \cite[II, Exercise 2.19(b)]{SilvermanAdvanced}).

\begin{proposition}\label{prop:isog}
    For each value of $q$ in Table \ref{tab:1} and $t\in\ZZ$ the curves $E_{q,t}$ and $H_{q,t}$ are not $\bar{\QQ}$-isogenous, provided $(q,t)\notin\{(1,1),(1,9),(3,-1),(3,0),(8,0),(12,0)\}$.
\end{proposition}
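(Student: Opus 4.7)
The plan is to combine Theorem \ref{thm:MazurKenku} with the criterion that two elliptic curves $E,H$ over $\bar\QQ$ admit a cyclic isogeny of degree $N$ if and only if $\Phi_N(j(E),j(H))=0$, where $\Phi_N$ denotes the classical modular polynomial.

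First, I would handle complex multiplication separately. Elliptic curves over $\QQ$ with CM have $j$-invariants in an explicit list of thirteen rational numbers. For each such $j_{CM}$ and each of the finitely many $q$ appearing in Table \ref{tab:1}, the equations $j(E_{q,t})=j_{CM}$ and $j(H_{q,t})=j_{CM}$ become polynomial equations in $t$ of bounded degree, hence yield only finitely many integer solutions. Each such pair $(q,t)$ may be inspected directly.

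For $(q,t)$ outside this finite collection, both $E_{q,t}$ and $H_{q,t}$ have endomorphism ring $\ZZ$ over $\bar\QQ$. A standard descent argument then shows that if $\phi\colon E_{q,t}\to H_{q,t}$ is a $\bar\QQ$-isogeny, then $\phi^\sigma=c_\sigma\phi$ in $\Hom(E_{q,t},H_{q,t})\otimes\QQ$ for a continuous homomorphism $c\colon G_\QQ\to\QQ^\times$, whose image is necessarily $\{\pm 1\}$. Hence $E_{q,t}$ is $\QQ$-isogenous to a quadratic twist $H_{q,t}^d$, and such a $\QQ$-isogeny factors through a cyclic isogeny of some degree $N$ constrained by Theorem \ref{thm:MazurKenku}. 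Since $j(H_{q,t}^d)=j(H_{q,t})$, we conclude that a $\bar\QQ$-isogeny between the two curves forces
\[ \Phi_N(j(E_{q,t}),j(H_{q,t}))=0 \quad\text{for some } N\in\{2,3,\dots,19,21,25,27,37,43,67,163\}. \]

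The last step is to substitute $j(E_{q,t})$ (given explicitly in the proof of Theorem \ref{thm:main}) and the analogous rational function $j(H_{q,t})$ (read off from the defining equation of $H_{q,t}$) into $\Phi_N(X,Y)$ and clear denominators, producing a polynomial $P_{q,N}(t)\in\ZZ[t]$ for each relevant $(q,N)$. Verifying $P_{q,N}\not\equiv 0$ by specialising at any convenient $t_0$, one then enumerates the integer roots of $P_{q,N}$ and checks that they lie in the exceptional set $\{(1,1),(1,9),(3,-1),(3,0),(8,0),(12,0)\}$. The principal obstacle is computational: the modular polynomials $\Phi_N$ for $N$ up to $163$ have enormous coefficients, so this step must be carried out by machine. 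In practice one expects the nontrivial content to come from small $N$, while for the larger degrees the polynomials $P_{q,N}$ will simply have no integer roots.
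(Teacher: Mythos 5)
Your proposal follows the same skeleton as the paper's proof---rule out CM, descend a $\bar\QQ$-isogeny to a $\QQ$-isogeny onto a quadratic twist via the rank-one endomorphism argument (this is exactly the paper's Lemma \ref{lem:Q_isog}), invoke Theorem \ref{thm:MazurKenku}, and then hunt for integer roots of $\Phi_N(j(E_{q,t}),j(H_{q,t}))$---but it differs in two places. First, for CM the paper simply observes that both $j$-invariants are non-integral (both curves have multiplicative reduction at $3$), whereas CM $j$-invariants over $\QQ$ are integers; this disposes of CM uniformly in $t$ and is cleaner than your finite enumeration over the thirteen CM $j$-invariants, which would leave you a residual list of pairs $(q,t)$ to ``inspect directly.'' Second, and more substantively, you propose to run the modular-polynomial computation over the entire Mazur--Kenku list up to $N=163$. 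The paper first shows that the ratio $\Delta_{H_{q,t}}/\Delta_{E_{q,t}}$ is independent of $t$ and equals $-3$ modulo squares, so the two defining cubics have different splitting fields, the curves have non-isomorphic $2$-torsion Galois modules (a property preserved under quadratic twist), and hence no odd-degree isogeny can exist. This cuts the computation down to the even $N\le 18$ in the list. Your route is still valid in principle---$\Phi_{163}$ is available and root-finding in $\ZZ[t]$ is algorithmic---but it is computationally far heavier, and your closing remark that the large-$N$ polynomials ``will simply have no integer roots'' is an expectation rather than something you have verified; in a computational proof of this kind that verification is the proof, so you would need to actually carry it out for every $N$ you have not excluded by a theoretical argument.
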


A key point in the proof of the above is the following lemma:

\begin{lemma}\label{lem:Q_isog}
    Suppose $E$, $H$ are elliptic curves over $\QQ$  which do not have potential complex multiplication. Possibly after replacing one of the curves by a quadratic twist, any isogeny $\phi: E\to H$ can be taken to be defined over $\QQ$.
\end{lemma}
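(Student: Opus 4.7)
The plan is to use the rigidity of $\Hom_{\bar{\QQ}}(E, H)$ as a Galois module under the non-CM hypothesis. I may assume without loss of generality that $E$ and $H$ are $\bar{\QQ}$-isogenous, since otherwise there is no nonzero isogeny and the statement is vacuous. The non-CM hypothesis then forces $\End_{\bar{\QQ}}(E) = \End_{\bar{\QQ}}(H) = \ZZ$, and composing any putative isogeny $\phi_2$ with a fixed dual isogeny $\hat\phi_1$ lands in $\End_{\bar{\QQ}}(E) = \ZZ$; this shows that $\Hom_{\bar{\QQ}}(E, H) \otimes \QQ$ is one-dimensional, and since the module is torsion-free it is in fact a free $\ZZ$-module of rank one, say generated by $\phi_0$.

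Next, $G_\QQ$ acts on this rank-one module by $\sigma\cdot\phi := \sigma\circ\phi\circ\sigma^{-1}$, and since $\Aut(\ZZ) = \{\pm 1\}$ this action factors through a continuous character $\chi: G_\QQ \to \{\pm 1\}$. If $\chi$ is trivial, then $\phi_0$ (and hence every isogeny $n\phi_0$) is already Galois-equivariant, i.e.\ defined over $\QQ$, and we are done. Otherwise $\chi$ is the quadratic character cutting out some quadratic extension of $\QQ$, and I would consider the corresponding quadratic twist $H^\chi$ of $H$, defined as the $\QQ$-form of $H_{\bar{\QQ}}$ corresponding to the cocycle $\sigma \mapsto [\chi(\sigma)] \in \Aut_{\bar{\QQ}}(H)$.

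To finish, I would observe that the twisting $\bar{\QQ}$-isomorphism $\iota: H \xrightarrow{\sim} H^\chi$ satisfies $\sigma(\iota) = \iota \circ [\chi(\sigma)]$ by the cocycle defining $H^\chi$, while $\sigma(\phi) = [\chi(\sigma)] \circ \phi$ for every $\phi \in \Hom_{\bar{\QQ}}(E, H)$ by the definition of $\chi$. A direct composition then gives $\sigma(\iota\circ\phi) = \sigma(\iota)\circ\sigma(\phi) = \iota\circ[\chi(\sigma)^2]\circ\phi = \iota\circ\phi$, exhibiting $\iota\circ\phi: E \to H^\chi$ as defined over $\QQ$. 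The only genuinely nontrivial input is the identification $\Hom_{\bar{\QQ}}(E, H) \cong \ZZ$ via the non-CM hypothesis; once this rigidity is secured, the quadratic twist kills the unique remaining Galois obstruction, and a symmetric argument applies if one prefers to twist $E$ in place of $H$.
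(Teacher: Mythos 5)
Your proof is correct and follows essentially the same route as the paper: the non-CM hypothesis gives $\End_{\bar{\QQ}}(E)=\ZZ$, which forces $\phi^\sigma=\pm\phi$, so the Galois action on the rank-one module of isogenies is a quadratic character that is then killed by the corresponding quadratic twist. The only difference is in the last step, where the paper realizes the descent explicitly in Weierstrass coordinates ($x_\phi$ is Galois-fixed, $y_\phi$ transforms by a sign, so $y_\phi\sqrt{d}$ is rational) while you use the cocycle formalism for twists; both are fine.
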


\begin{proof}
    Take $\phi: E \to H$ defined over a proper extension of $\QQ$ with Galois conjugate $\phi^\sigma \ne \phi$, so $(\phi^\sigma)^\vee\circ\phi \in \End(E)=\ZZ$, from whence $\phi^\sigma = \pm \phi$ by comparing degrees.
    
    Take $E$, $H$ to be given by short Weierstrass models. Then for any $\phi$, writing $\phi=(x_\phi,y_\phi)$, we see that $x_\phi$ is a rational function and at worst Galois acts by $y_\phi\mapsto -y_\phi$. Therefore the Galois action is trivial or factors through a quadratic field and $y_\phi$ is rational or defined over $\QQ(\sqrt{d})$, for some $d$. In the latter case, $y_\phi\sqrt{d}$ is fixed by Galois and hence is rational.
\end{proof}

\begin{proof}[Proof of Proposition \ref{prop:isog}]
    Firstly we note that the $j$-invariants of $E_{q,t}$, $H_{q,t}$ are non-integral in all the cases we consider (e.g.\ because both have multiplicative reduction at $3$) so neither curve has potential CM, and we are in the situation of Lemma \ref{lem:Q_isog}.
    
    Recall that the ratio $\Delta_{H_{q,t}}/\Delta_{E_{q,t}}$ is independent of $t$, so a quick check shows that this ratio is $-3$ modulo squares for all pairs $(q,t)$ in which we are interested. Therefore the cubics $x^3+q^2x^2+3q^3x+3q^6t$ and $x^3+q(27-2q-81qt)x^2+q(q-9)^3x$ have different splitting fields (hence the curves have different $2$-torsion fields) for all values of $t$ and so our elliptic curves can never be isogenous via an isogeny of odd degree, as this would give an isomorphism on $2$-torsion. Quadratic twists have isomorphic $2$-torsion so this is also the case when replacing one of $E_{q,t}$ or $H_{q,t}$ by a quadratic twist.

    Now we need only check for isogenies between curves with $j$-invariants $j(E_{q,t})$ and $j(H_{q,t})$ of even degrees appearing in Theorem \ref{thm:MazurKenku}. To do so, fixing in turn each relevant value of $q$, we compute $\Phi_N(j(E_{q,t}),j(H_{q,t}))$ as a rational function in $t$ for each of these values of $N$ and compute the rational roots of its numerator. The integer roots we find are precisely those excluded in the statement of the proposition (except for the degenerate case $(q,t)=(12,0)$).
    
     Note that the relevant modular polynomials are stored in Magma \cite{Magma}, which we use to carry out these computations, except for $\Phi_{18}$, which is computed by the authors of \cite{BOS2016} and is available at \url{https://math.mit.edu/~drew/ClassicalModPolys.html}. To allow the reader to verify this proof, we have made the code for this computation available at \url{https://github.com/hspen99/HessianIsogenyComp}.
\end{proof}

\noindent\textbf{Acknowledgements.} I thank Vladimir Dokchitser for his supervision and comments on an earlier draft, Dominik Bullach for suggesting that I look into Selmer companions, and Alexandros Konstantinou for a helpful conversation, including for pointing me towards the reference \cite{MPT24}. I thank the anonymous referee for their many helpful comments and suggestions, not least for their ideas towards the given, more elegant, proofs of Lemma \ref{lem:mult} and Proposition \ref{prop:canon}.

This work was supported by the Engineering and Physical Sciences Research Council [EP/S021590/1], the EPSRC Centre for Doctoral Training in Geometry and Number Theory (The London School of Geometry and Number Theory), University College, London.

\bibliographystyle{plain}
\bibliography{main} 

\begin{thebibliography}{1}

\bibitem{Magma}
Wieb Bosma, John Cannon, and Catherine Playoust.
\newblock {The Magma Algebra System I: The User Language}.
\newblock {\em J. Symb. Comput.}, 24(3):235--265, 1997.

\bibitem{BOS2016}
Jan~Hendrik Bruinier, Ken Ono, and Andrew~V. Sutherland.
\newblock Class polynomials for nonholomorphic modular functions.
\newblock {\em J. Number Theory}, 161:204--229, 2016.

\bibitem{KodairaTypesD2}
Tim Dokchitser and Vladimir Dokchitser.
\newblock A remark on {T}ate’s algorithm and {K}odaira types.
\newblock {\em Acta Arith.}, 160(1):95--100, 2013.

\bibitem{Kenku1982}
Monsur~A. Kenku.
\newblock On the number of $\mathbb{Q}$-isomorphism classes of elliptic curves in each $\mathbb{Q}$-isogeny class.
\newblock {\em J. Number Theory}, 15(2):199--202, 1982.

\bibitem{MazurRubin2012}
Barry Mazur and Karl Rubin.
\newblock Selmer companion curves.
\newblock {\em Trans. Amer. Math. Soc.}, 367(1):401--421, 2015.

\bibitem{MPT24}
Marzio Mula, Federico Pintore, and Daniele Taufer.
\newblock The {Hessian} of elliptic curves.
\newblock arXiv: \href{https://arxiv.org/abs/2407.17042}{\url{2407.17042}}, 2024.

\bibitem{SilvermanAdvanced}
Joseph~H. Silverman.
\newblock {\em Advanced topics in the arithmetic of elliptic curves}, volume 151 of {\em GTM}.
\newblock Springer--Verlag, 1994.

\bibitem{Yu2Companions}
Myungjun Yu.
\newblock 2-{S}elmer near-companion curves.
\newblock {\em Trans. Amer. Math. Soc.}, 372(1):425--440, 2019.

\end{thebibliography}

\end{document}